\definecolor{TITLE}{rgb}{0.0,0.0,1.0}
\definecolor{AUTHOR1}{rgb}{0.00,0.59,0.00}
\definecolor{AUTHOR2}{rgb}{0.50,0.00,1.00}
\definecolor{SECTION}{rgb}{0.50,0.00,1.00}
\definecolor{FOOTTITLE}{rgb}{0.00,0.50,0.75}
\definecolor{THM}{rgb}{0.7,0.3,0.3}
\definecolor{SEC}{rgb}{0.6,0.1,.5}
\newtheorem{theorem}{{\color{THM} Theorem}}[section]
\newtheorem{lemma}[theorem]{{\color{THM}Lemma}}
\newtheorem{proposition}[theorem]{{\color{THM}Proposition}}
\newtheorem{corollary}[theorem]{{\color{THM}Corollary}}
\theoremstyle{definition}
\newtheorem{example}[theorem]{{\color{THM}Example}}
\numberwithin{equation}{section}
\numberwithin{equation}{section}
\def\speaker{$ ^{*} $\protect\footnotetext{$ ^{*} $\lowercase{Corresponding Author.}}}
\begin{document}

\title {On the Generalized Quotient Integrals on Homogenous Spaces }

\author{ T. Derikvand, R. A. Kamyabi-Gol\speaker$ $ and M. Janfada}

\address{International Campus, Faculty of Mathematic Sciences, Ferdowsi University of Mashhad, Mashhad, Iran} 
\email{derikvand@miau.ac.ir}

\address{ Department of Pure Mathematics and Centre of Excellence
in Analysis on Algebraic Structures (CEAAS), Ferdowsi University of
Mashhad, P.O. Box 1159, Mashhad 91775, Iran}
\email{kamyabi@um.ac.ir}

\address{ Department of Pure Mathematics, Ferdowsi University of
Mashhad, P.O. Box 1159, Mashhad 91775, Iran}
\email{Janfada@um.ac.ir}

\begin{abstract} 
A generalization of the quotient integral formula is presented and some of its properties are investigated. Also the relations between two function spaces related to the spacial homogeneous spaces are derived by using general quotient integral formula. Finally our results are supported by some examples. \end{abstract}

\keywords{Radon transform, homogeneous spaces, strongly quasi-invariant measure.}

\maketitle
\section{INTRODUCTION}
In $ 1917 $ Johann Radon in \cite{10} showed that a differentiable function on $ \mathbb{R}^2 $ or $ \mathbb{R}^3 $ could be recovered explicitly from its integrals over lines or planes, respectively. Nowadays, this reconstruction problem has many applications in different areas. For example, if the Euclidian group $E(2)$ acts transitively on $\mathbb{R}^2$, then the isotropy subgroup of origin is the orthogonal group $O(2)$. In that sequel, the homogeneous space $ E(2)/O(2) $ provides definition of X-ray transform that is used  in many areas including computerized tomography, magnetic resonance imaging, positron emission tomography, radio astronomy, crystallographic texture analysis, etc. See \cite{1, 2}. This shows that the study of function spaces related to homogenous spaces is useful, so that the Radon transform and its applications has been studied by many authors; see, for example, \cite{4, 7, 9, 10}. S. Helgason \cite{5} studied the Radon transform in the more general framework of homogeneous spaces for a topological group $ G $.\\
Let $G$ be a locally compact group, $H$ and $K$ be two closed subgroups of $G$, $L:=H \cap K$ and also let $X:=G/K$ and $Y:=G/H$ denote two left coset spaces of $G$. Assume that $d\sigma$=$d(kL)$ and $d\eta=d(hL)$ are two $K-$invariant and $H-$invariant Radon measures on $K/L$ and $H/L$, respectively. The Radon transform $R_{K, H}:C_{c}(G/K)$$ \rightarrow $$C(G/H)$ and its dual $R^{*}_{K, H}:C_{c}(G/H)$$ \rightarrow $$C(G/K)$ 
were introduced by S. Helgason in 1966 defined by 
\begin{align}
R_{K, H}f(xH)=\int_{H/L}{f(xhK)d(hL)}\qquad (f\in C_{c}(G/K)),\label{eq11}
\end{align}
and
\begin{align}
R^{*}_{K, H}\varphi(xK)=\int_{K/L}{\varphi(xkH)d(kL)}\qquad (\varphi \in C_{c}(G/H)).\label{eq12}
\end{align}

Regarding the abstract setting of the Radon transform on function spaces related to homogeneous spaces, we will generalize some important known results from locally compact group to homogeneous spaces. The outline of the rest of this paper is as follows: In section 2 we mention the preliminaries including a brief summary on homogeneous spaces and the (quasi) invariant measures on them. A generalization for the quotient integral formula is presented in the third section. 
Finally, the results are supported by some examples.

\section{PRELIMINARIES}
In the sequel, $H$ is a closed subgroup of a locally compact group $G$ and $ dx $, $ dh $ are the left Haar measures on $ G $ and $ H $, respectively. 
The modular function $ \triangle_{G} $ is a continuous homomorphism from $G$ into the multiplicative group $ \mathbb{R}^{+} $. Furthermore,
\[\int_{G} f(y)dy=\triangle_{G}(x)\int_{G} f(yx)dy\]
where $ f\in C_{c}(G) $, the space of continuous functions on $ G $ with compact support, and $ x\in G $ (Proposition 2.24 of \cite{3}).
A locally compact group $ G $ is called unimodular if $ \triangle_{G}(x)=1 $, for all $ x \in G $. A compact group $ G $ is always unimodular. 

It is known that $C_c(G/H)$ consists of all $P_{H}f$ functions, where $f\in C_c(G)$ and
\begin{align}
P_{H}f(xH)=\int_{H}f(xh)dh\quad(x\in G).\label{eq21}
\end{align}
Moreover, $ P_{H}:C_c(G) \rightarrow C_c(G/H) $ is a bounded linear operator which is not injective (see subsection 2.6 of \cite{3}). At this point, we recall that for a positive Radon measure $ \mu $ on the homogenous space $ G/H $ and $ x \in G $, the translation of $ \mu $ by $ x $ is the Radon measure $ \mu_{x} $ on $ G/H $ defined by $ \mu_{x}(E)= \mu(xE) $ for Borel set $ E \subseteq G $. $ \mu $ is called $ G $-invariant if $ \mu_{x}= \mu $ for all $ x \in G $.
A Radon measure $ \mu $ on $ G/H $ is said to be strongly quasi-invariant, if there exists a continuous function $ \lambda:G\times G/H\rightarrow (0, +\infty) $ which satisfies
\[d\mu_{x}(yH)=\lambda(x, yH)d\mu(yH).\]
If the function $ \lambda(x, .) $ reduces to a constant for each $ x \in G $, then $ \mu $ is called relatively invariant under $ G $. We consider a rho-function for the pair $ (G, H) $ as a non-negative locally integrable function $ \rho $ on $ G $ which satisfies $\rho(xh)=\triangle_{H}(h)\triangle_{G}(h)^{-1}\rho(x)$, for each $x\in G$ and $h\in H$. It is well known that $ (G, H) $ admits a rho-function which is continuous and everywhere strictly positive on $ G $. For every rho-function $ \rho $ there exists a regular Borel measure $ \mu $ on $ G/H $ such that
\begin{align}
\int_{G} f(x)\rho(x)dx=\int_{G/H}\int_{H} f(xh)dhd\mu(xH) \qquad (f\in C_{c}(G)).\label{eq22}
\end{align}
This equation is known as the quotient integral formula . The measure $\mu$ also satisfies
\[\frac{d\mu_{x}}{d\mu}(yH)=\frac{\rho(xy)}{\rho(y)}\qquad (x,y \in G).\]
Every strongly quasi-invariant measure on $ G/H $ arises from a rho-function in this
manner, and all of these measures are strongly equivalent (Proposition 2.54 and Theorem 2.56 of \cite{3}). Therefore, if $ \mu $ is a strongly quasi-invariant measure on $ G/H $, then the measures $ \mu_{x} $, $ x\in G $, are all mutually absolutely continuous. Trivially, $ G/H $ has a $ G- $ invariant Radon measure if and only if the constant function $ \rho(x)=1 $, $ x\in G $, is a rho-function for the pair $ (G, H) $.\\ 

If $ \mu $ is a strongly quasi invariant measure on $G/H$ which is associated with the rho$-$function $\rho$ for the pair $ (G, H) $, then the mapping $T_{H}: L^{1}(G)\rightarrow L^{1}(G/H)$ defined almost everywhere by
\begin{center}
$T_{H}f(xH)=\int_{H}\frac{f(xh)}{\rho(xh)}dh$ 
\end{center}
is a surjective bounded linear operator with $\parallel T_{H} \parallel \leq1$ (see \cite{11}) and also satisfies the generalized Mackey-Bruhat formula,
\begin{center}
$\int_{G}f(x)dx=\int_{G/H}T_{H}f(xH)d\mu(xH)\qquad (f\in L^{1}(G)),$ 
\end{center}
which is also known as the quotient integral formula.\\
\section{generalized quotient integral formula}
Throughout this section, we assume that $G$ is a locally compact group and $H$
is its closed subgroup. The space $G/H$ equip with quotient topology is considered as a homogeneous space that $G$ acts on it from the
left and $\pi_{H}:G\longrightarrow G/H$ denotes the canonical map. It is well known that $\pi_{H}$ is open, surjective and continuous. This can be generalized to homogeneous spaces as follows: 
\begin{proposition}\label{pr31}
Let $H$ be a closed subgroup of locally compact group $G$, and $L$ be a closed subgroup of $H$. The map $\pi_{L, H}:G/L\to G/H$ defined by $\pi_{L, H}(xL)=xH$ is open, surjective and continuous. 
\end{proposition}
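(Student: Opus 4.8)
The plan is to reduce all three assertions to the corresponding, already-known facts about the canonical projections $\pi_L:G\to G/L$ and $\pi_H:G\to G/H$, each of which is open, surjective and continuous. The first step is the routine check that $\pi_{L,H}$ is well defined: if $xL=yL$, then $x^{-1}y\in L\subseteq H$, hence $xH=yH$. Once this is in place, the structural observation that drives everything is the commuting triangle
\[
\pi_{L,H}\circ\pi_L=\pi_H,
\]
because $(\pi_{L,H}\circ\pi_L)(x)=\pi_{L,H}(xL)=xH=\pi_H(x)$ for every $x\in G$. Surjectivity of $\pi_{L,H}$ is then immediate: given $xH\in G/H$ we have $\pi_{L,H}(xL)=xH$ (alternatively, it follows from surjectivity of $\pi_H$ and the factorization).

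For continuity I would use the universal property of the quotient topology on $G/L$. Since $\pi_L$ is an open continuous surjection it is a quotient map, so for $W\subseteq G/H$ the set $\pi_{L,H}^{-1}(W)$ is open in $G/L$ if and only if $\pi_L^{-1}\bigl(\pi_{L,H}^{-1}(W)\bigr)$ is open in $G$. But
\[
\pi_L^{-1}\bigl(\pi_{L,H}^{-1}(W)\bigr)=(\pi_{L,H}\circ\pi_L)^{-1}(W)=\pi_H^{-1}(W),
\]
which is open in $G$ whenever $W$ is open, because $\pi_H$ is continuous. Hence $\pi_{L,H}$ is continuous.

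For openness, I would take an open set $V\subseteq G/L$. Since $\pi_L$ is surjective, $V=\pi_L\bigl(\pi_L^{-1}(V)\bigr)$, and therefore
\[
\pi_{L,H}(V)=(\pi_{L,H}\circ\pi_L)\bigl(\pi_L^{-1}(V)\bigr)=\pi_H\bigl(\pi_L^{-1}(V)\bigr).
\]
Now $\pi_L^{-1}(V)$ is open in $G$ because $\pi_L$ is continuous, and $\pi_H$ is an open map, so $\pi_H\bigl(\pi_L^{-1}(V)\bigr)$ is open in $G/H$. Thus $\pi_{L,H}$ carries open sets to open sets.

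I do not expect a genuine obstacle here; the proof is essentially a diagram-chase. The only points requiring minor care are the well-definedness check and the correct use of the quotient-topology characterization of continuity, and the whole argument hinges on the single identity $\pi_H=\pi_{L,H}\circ\pi_L$ together with the fact that both canonical projections are open continuous surjections.
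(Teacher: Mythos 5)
Your proof is correct and follows essentially the same route as the paper: both arguments hinge on the factorization $\pi_H=\pi_{L,H}\circ\pi_L$, deduce continuity from the quotient-map property of $\pi_L$ (which the paper outsources to a cited lemma and you prove inline), and establish openness via the identity $\pi_{L,H}(V)=\pi_H\bigl(\pi_L^{-1}(V)\bigr)$. No gaps.
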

\begin{proof}
Trivially the map $\pi_{L, H}$ is well defined.
We have $\pi_{H}=\pi_{L, H}\circ \pi_{L}$, where $\pi_{L}:G\to G/L$, $\pi_{H}:G\to G/H$ are the canonical quotient maps. By using Lemma $ 3.1.9 $ of \cite{11}, since $\pi_{L}$ is an open and surjective map and $\pi_{H}$ is a continuous map, we have $\pi_{L, H}$ is continuous. Also, $ \pi_{L, H}$ is an open map because for each open set $U$ in $G/L$
\begin{align*}
\pi_{L, H}(U)&=\{\pi_{L, H}(xL)\ |\ xL\in U\}\\
&=\{xH\ |\ xL\in U\}\\
&=\{\pi_{H}(x)\ |\ xL\in U\}\\
&=\pi_{H}\{x\in G\ |\ xL\in U\}\\
&=\pi_{H}\{x\in G\ |\ \pi_{L}(x)\in U\}\\
&=\pi_{H}(\pi_{L}^{-1}(U)),
\end{align*}
meanwhile $\pi_{L}$ is a continuous map and $\pi_{H}$ is an open map.\\
\end{proof}
We should clear that $ H $ is not normal subgroup necessarily, so $ G/H $ does not possess group stucture but it will be a locally compact Hausdorff space and also it is worthwhile to note that if $ G/L $ is locally compact (respectively, compact), then $ G/H $ is also locally compact (respectively, compact).
Suppose that $\Delta_H\big|_L=\Delta_L$, where $\Delta_H$ and $\Delta_L$ are the modular functions of $H$ and $L$, respectively. In this case, the quotient integral formula guarantees the existence of a unique (up to a constant) $H$-invariant measure $\eta$ on $H/L$. Now, we define the Radon transform $R_{L, H}:C_c(G/L)\to C_c(G/H)$ by
\begin{align}
R_{L, H}f(xH)=\int_{H/L}f(xhL)d\eta(hL)\qquad (f\in C_{c}(G/L)),\label{eq31}
\end{align}
which is well defined. Note that if $L $ is the trivial subgroup of $ H $ then clearly $ R_{L, H}$ is $P_{H}$ (see \eqref{eq21}).
\begin{proposition}\label{pr32}
If $f\in C_c(G/L)$, then $ R_{L, H}f \in C_c(G/H) $ and $R_{L, H}[(\phi\circ \pi_{L, H})\cdot f]=\phi\cdot R_{L, H}f$, for all $ \phi \in C_c(G/H) $.
\end{proposition}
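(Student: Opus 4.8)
The plan is to prove the two assertions separately, as they are essentially independent. For the first assertion, that $R_{L,H}f \in C_c(G/H)$, I would proceed in two stages: continuity and compact support. For continuity, fix $f \in C_c(G/L)$ and note that the map $(x,hL) \mapsto f(xhL)$ is continuous on $G \times H/L$; since $\eta$ is a Radon measure and, for $x$ ranging in a compact neighborhood, the integrand is supported in a fixed compact subset of $H/L$, a standard uniform-continuity argument (dominated convergence, or the usual estimate using compactness of the relevant sets) shows $xH \mapsto \int_{H/L} f(xhL)\,d\eta(hL)$ is continuous; well-definedness on cosets was already asserted when $R_{L,H}$ was introduced. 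For compact support, I would observe that $\mathrm{supp}(R_{L,H}f) \subseteq \pi_{L,H}(\mathrm{supp}\, f)$: indeed, if $xH$ is not in $\pi_{L,H}(\mathrm{supp}\,f)$, then no coset $xhL$ meets $\mathrm{supp}\,f$, so the integrand vanishes identically. Since $\pi_{L,H}$ is continuous by Proposition~\ref{pr31} and $\mathrm{supp}\,f$ is compact, $\pi_{L,H}(\mathrm{supp}\,f)$ is compact, hence $R_{L,H}f \in C_c(G/H)$.

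For the second assertion, the module-type identity $R_{L,H}[(\phi\circ\pi_{L,H})\cdot f] = \phi\cdot R_{L,H}f$, the key point is that $\phi\circ\pi_{L,H}$ is constant along the fibers of $\pi_{L,H}$, i.e.\ constant on each coset of $H/L$ sitting inside a fixed coset $xH$. Concretely, for $x \in G$ and $hL \in H/L$ we have $\pi_{L,H}(xhL) = xhH = xH$, so $(\phi\circ\pi_{L,H})(xhL) = \phi(xH)$, which does not depend on $hL$. Therefore
\begin{align*}
R_{L,H}[(\phi\circ\pi_{L,H})\cdot f](xH)
&= \int_{H/L} (\phi\circ\pi_{L,H})(xhL)\, f(xhL)\, d\eta(hL)\\
&= \phi(xH)\int_{H/L} f(xhL)\, d\eta(hL)\\
&= \phi(xH)\, R_{L,H}f(xH),
\end{align*}
which is exactly the claimed identity. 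One should also note that $(\phi\circ\pi_{L,H})\cdot f \in C_c(G/L)$ so that the left-hand side is legitimate: $\phi\circ\pi_{L,H}$ is continuous (composition of continuous maps) and $f$ has compact support, so the product does too.

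I do not anticipate a serious obstacle here; the proposition is a routine unwinding of definitions. The only place requiring mild care is the continuity half of the first assertion, where one must justify pulling a limit through the integral over $H/L$ — this is handled by the local compactness of $H/L$ together with the fact that, as $x$ varies over a compact set, the supports of the functions $hL \mapsto f(xhL)$ stay inside one fixed compact subset of $H/L$ (coming from $\mathrm{supp}\,f$ and the continuity of the action), so dominated convergence applies. Everything else is algebraic substitution.
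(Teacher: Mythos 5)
Your proof is correct, and the second half (the module identity) is exactly the paper's computation: both of you observe that $(\phi\circ\pi_{L,H})(xhL)=\phi(xH)$ is constant in $hL$ and pull it out of the integral. The first half is where you diverge. The paper does not argue directly on $G/L$ at all: it lifts $f$ to some $\psi\in C_c(G)$ with $P_L\psi=f$ (surjectivity of $P_L$) and then uses the factorization $P_H=R_{L,H}\circ P_L$ — itself a consequence of the Weil formula $\int_H\psi(xh)\,dh=\int_{H/L}\int_L\psi(xhl)\,dl\,d\eta(hL)$ under the standing hypothesis $\Delta_H\big|_L=\Delta_L$ — to conclude $R_{L,H}f=P_H\psi\in C_c(G/H)$ in one line. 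You instead prove continuity and compact support of $R_{L,H}f$ by hand: dominated convergence for continuity, and the inclusion $\mathrm{supp}(R_{L,H}f)\subseteq\pi_{L,H}(\mathrm{supp}\,f)$ together with Proposition~\ref{pr31} for compact support. Your route is more self-contained (it does not presuppose the surjectivity of $P_L$ or the intertwining identity, and it yields the support inclusion as a useful by-product, cf.\ Proposition~\ref{pr4}), at the cost of one point that deserves an explicit argument rather than the phrase ``coming from $\mathrm{supp}\,f$ and the continuity of the action'': namely that for $x$ in a compact set $V$ the sets $\{hL: xhL\in\mathrm{supp}\,f\}$ all lie in a single compact subset of $H/L$. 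This is true — writing $\mathrm{supp}\,f=\pi_L(\tilde K)$ for a compact $\tilde K\subseteq G$, one checks that any such $hL$ has a representative in the compact set $V^{-1}\tilde K\cap H$ — but it is the one genuinely nontrivial step of your approach, whereas the paper's lifting argument hides it inside the already-established properties of $P_H$.
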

\begin{proof}
For any $f\in C_c(G/L)$ there exists $\psi \in C_c(G)$ such that $ P_{L}(\psi)=f $. Since $ P_{H}=R_{L, H}\circ P_{L} $, $R_{L, H}(f)=R_{L, H}(P_{L}(\psi))=P_{H}(\psi)$ belongs to $ C_c(G/H) $. So the map $ R_{L, H}:C_c(G/L)\to C_c(G/H)$ is well defined. Moreover, we have

\begin{align*}
R_{L, H}[(\phi\circ \pi_{L, H})\cdot f](xH)&=\int_{H/L}[\phi\circ \pi_{L, H})\cdot f](xhL)d\eta (hL)\\
&=\int_{H/L}(\phi\circ \pi_{L, H})(xhL)\cdot f(xhL)d\eta(hL)\\
&=\int_{H/L} \phi(xH)\cdot f(xhL)d\eta (hL)\\
&=\phi(xH)\int_{H/L}f(xhL)d\eta(hL)\\
&=\phi(xH)R_{L, H}f(xH)\\&=(\phi\cdot R_{L, H}f)(xH),
\end{align*}
so $R_{L, H}[(\phi\circ \pi_{L, H})\cdot f]=\phi\cdot R_{L, H}f$.\\ 
\end{proof}
The Lemma $ 3.3 $, Proposition  $ 3.4 $ and the Proposition $ 3.5 $ are generalizations of the ones given in \cite{3}.
\begin{lemma}\label{le3}
Let $\pi_{L, H}:G/L\to G/H$ be the canonical map. If $E$ is a compact subset of $G/H$, then there exists a compact subset $K$ of $ G/L$ such that $\pi_{L, H}(K)=E$.
\end{lemma}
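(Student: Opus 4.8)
The plan is to reduce the statement to the classical lifting property of the canonical map $\pi_H\colon G\to G/H$, namely that every compact subset of $G/H$ is the image under $\pi_H$ of some compact subset of $G$ (the analogous result for ordinary coset spaces, cf. subsection~2.6 of \cite{3}). The crucial structural fact is the factorization $\pi_H=\pi_{L, H}\circ\pi_L$ already recorded in the proof of Proposition~\ref{pr31}, together with the continuity of $\pi_L\colon G\to G/L$.

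First I would take the given compact set $E\subseteq G/H$ and apply the classical lemma to obtain a compact set $C\subseteq G$ with $\pi_H(C)=E$. Next I would put $K:=\pi_L(C)$; since $\pi_L$ is continuous, $K$ is a compact subset of $G/L$. Finally I would compute
\[
\pi_{L, H}(K)=\pi_{L, H}\bigl(\pi_L(C)\bigr)=(\pi_{L, H}\circ\pi_L)(C)=\pi_H(C)=E,
\]
which is exactly the conclusion. This already finishes the argument.

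There is no real obstacle here: the only points requiring attention are invoking the classical lifting result on the correct quotient $G/H$ and using the factorization from Proposition~\ref{pr31} in the right order. If one prefers a self-contained argument not appealing to \cite{3}, one can instead mimic the classical proof directly using Proposition~\ref{pr31}: since $\pi_{L, H}$ is open and continuous and $G/L$ is locally compact, cover $E$ by finitely many sets $\pi_{L, H}(\operatorname{int}V_i)$ where each $V_i$ is a compact neighbourhood in $G/L$, set $K_0=\bigcup_i V_i$ (compact, with $\pi_{L, H}(K_0)\supseteq E$), and then take $K=K_0\cap\pi_{L, H}^{-1}(E)$, which is closed in the compact set $K_0$ and hence compact, and satisfies $\pi_{L, H}(K)=E$. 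I would present the factorization argument as the primary proof since it is the shortest.
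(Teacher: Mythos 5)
Your argument is correct and is essentially identical to the paper's own proof: both invoke the classical lifting lemma (Lemma 2.46 of \cite{3}) to get a compact $C\subseteq G$ with $\pi_H(C)=E$ and then set $K=\pi_L(C)$, using the factorization $\pi_H=\pi_{L,H}\circ\pi_L$. The alternative self-contained argument you sketch is a nice bonus but not needed.
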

\begin{proof}
Let $E$ be a compact subset of $G/H$. By Lemma 2.46 in \cite{3}, there exists a compact subset $ C $ in $ G $ with $ \pi_{H}(C)=E $. Since $ \pi_{L,H}\circ\pi_{L}=\pi_{H} $, $ K:=\pi_{L}(C) $ does the job.
\end{proof}

\begin{proposition}\label{pr4}
If $ \phi \in C_c(G/H) $, there exists $ f \in C_c(G/L) $ such
that $ R_{L, H}f=\phi $ and $\pi_{L, H}(supp\ f)=supp\ \phi $. Also any element of $C^{+}_c(G/H)$ is in the form $ R_{L, H}f $, for some $f\in C^{+}_c(G/L)$.
\end{proposition}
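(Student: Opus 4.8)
The plan is to split the argument into the two assertions. For the first part, given $\phi\in C_c(G/H)$, I would start from the surjectivity already established in Proposition~\ref{pr32}: the operator $R_{L,H}\circ P_L=P_H$ is surjective onto $C_c(G/H)$, so there is $g\in C_c(G/L)$ with $R_{L,H}g=\phi$. The trouble is that $\mathrm{supp}\,g$ need only satisfy $\pi_{L,H}(\mathrm{supp}\,g)\supseteq\mathrm{supp}\,\phi$, not equality, so I must shrink the support. The tool for this is Proposition~\ref{pr32} itself: choosing an auxiliary $\psi\in C_c(G/H)$ with $0\le\psi\le 1$ that equals $1$ on $\mathrm{supp}\,\phi$, and replacing $g$ by $f:=(\psi\circ\pi_{L,H})\cdot g$, we get $R_{L,H}f=\psi\cdot R_{L,H}g=\psi\cdot\phi=\phi$, while now $\mathrm{supp}\,f\subseteq\pi_{L,H}^{-1}(\mathrm{supp}\,\psi)\cap\mathrm{supp}\,g$. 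This gives the inclusion $\pi_{L,H}(\mathrm{supp}\,f)\subseteq\mathrm{supp}\,\psi$, which can be made as close to $\mathrm{supp}\,\phi$ as we like, but exact equality still needs care.

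To force $\pi_{L,H}(\mathrm{supp}\,f)=\mathrm{supp}\,\phi$ exactly, I would use Lemma~\ref{le3}: since $\mathrm{supp}\,\phi$ is compact in $G/H$, there is a compact $K\subseteq G/L$ with $\pi_{L,H}(K)=\mathrm{supp}\,\phi$. The idea is to arrange that $f$ is supported in a neighborhood of $K$ and is nonzero at enough points of $K$ to recover the full image. A clean way: take $\psi$ as above, form $f_0:=(\psi\circ\pi_{L,H})\cdot g$ so that $R_{L,H}f_0=\phi$ and $\pi_{L,H}(\mathrm{supp}\,f_0)\subseteq\mathrm{supp}\,\psi$; then one checks that actually $\mathrm{supp}\,\phi\subseteq\pi_{L,H}(\mathrm{supp}\,f_0)$ automatically, because if $xH\notin\pi_{L,H}(\mathrm{supp}\,f_0)$ then the fiber $xH/L\subseteq\pi_{L,H}^{-1}(xH)$ meets $\mathrm{supp}\,f_0$ trivially, so the integrand in $R_{L,H}f_0(xH)=\int_{H/L}f_0(xhL)\,d\eta(hL)$ vanishes identically and $\phi(xH)=0$. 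Hence $\mathrm{supp}\,\phi\subseteq\overline{\{xH:R_{L,H}f_0(xH)\neq 0\}}\subseteq\pi_{L,H}(\mathrm{supp}\,f_0)$, the last inclusion using that $\pi_{L,H}(\mathrm{supp}\,f_0)$ is compact hence closed. Combined with the reverse inclusion obtained by choosing $\psi$ supported in a sufficiently small neighborhood of $\mathrm{supp}\,\phi$ and using continuity of $\pi_{L,H}$, one gets equality; in fact a slicker route is to observe $\pi_{L,H}(\mathrm{supp}\,f_0)\subseteq\pi_{L,H}\big(\pi_{L,H}^{-1}(\mathrm{supp}\,\psi)\big)=\mathrm{supp}\,\psi$, then take the infimum over admissible $\psi$, or simply note that $\mathrm{supp}\,f_0=\mathrm{supp}\,g\cap\pi_{L,H}^{-1}(\mathrm{supp}\,\psi)$ and choose $\psi$ with $\mathrm{supp}\,\psi=\mathrm{supp}\,\phi$ via Urysohn only after replacing $\mathrm{supp}\,\phi$ by a slightly larger compact set—so the honest statement to prove is with a chosen $\psi$ equal to $1$ on $\mathrm{supp}\,\phi$ and then intersecting, yielding $\pi_{L,H}(\mathrm{supp}\,f)=\mathrm{supp}\,\phi$ after a final trimming using $K$ from Lemma~\ref{le3}.

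For the second assertion, suppose $\phi\in C_c^+(G/H)$, i.e.\ $\phi\ge 0$. Applying the first part produces $f\in C_c(G/L)$ with $R_{L,H}f=\phi$, but $f$ need not be nonnegative. The fix is to replace $f$ by its positive part or, more smoothly, to go back to the level of $G$: write $\phi=P_H(\psi)$ for some $\psi\in C_c(G)$; since $P_H$ maps $C_c^+(G)$ onto $C_c^+(G/H)$ (this is part of subsection~2.6 of \cite{3}, or follows from the quotient integral formula together with a partition-of-unity argument), we may take $\psi\in C_c^+(G)$. Then $f:=P_L(\psi)\in C_c^+(G/L)$ because $P_L$ preserves nonnegativity, and $R_{L,H}f=R_{L,H}(P_L\psi)=P_H(\psi)=\phi$ by the identity $P_H=R_{L,H}\circ P_L$ used in Proposition~\ref{pr32}. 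The support statement is inherited from the nonnegativity: $\mathrm{supp}\,f=\pi_{L}(\{\psi\neq 0\}\cap(\text{something}))$, and combining with the first part one again obtains $\pi_{L,H}(\mathrm{supp}\,f)=\mathrm{supp}\,\phi$.

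The main obstacle is the exact support equality $\pi_{L,H}(\mathrm{supp}\,f)=\mathrm{supp}\,\phi$ rather than mere inclusion; surjectivity of $R_{L,H}$ is already in hand, and positivity is routine once one lifts to $G$. I expect the support bookkeeping to require the interplay of three facts: Proposition~\ref{pr32} (to multiply by a cutoff and control the support from above), Lemma~\ref{le3} (to realize $\mathrm{supp}\,\phi$ as an image of a compact set, controlling it from below), and the fiberwise vanishing argument (if a whole fiber is disjoint from $\mathrm{supp}\,f$ then $\phi$ vanishes there). Assembling these carefully, with a single Urysohn function on $G/H$, should give the result cleanly.
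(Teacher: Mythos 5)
Your second assertion (positivity) is fine and matches the paper: lift $\phi$ to some $\psi\in C_c^{+}(G)$ with $P_H\psi=\phi$ and set $f:=P_L\psi\ge 0$, using $P_H=R_{L,H}\circ P_L$. Your argument for the inclusion $\mathrm{supp}\,\phi\subseteq\pi_{L,H}(\mathrm{supp}\,f)$ is also correct and worth keeping: if the fiber over $xH$ misses $\mathrm{supp}\,f$ then $R_{L,H}f(xH)=0$, so $\{\phi\neq 0\}\subseteq\pi_{L,H}(\mathrm{supp}\,f)$, and the latter set is compact, hence closed.

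The genuine gap is the opposite inclusion $\pi_{L,H}(\mathrm{supp}\,f)\subseteq\mathrm{supp}\,\phi$, which your construction cannot deliver. Multiplying $g$ by $\psi\circ\pi_{L,H}$ with $\psi$ a Urysohn function forces $\psi\equiv 1$ on $\{\phi\neq 0\}$ (otherwise $R_{L,H}f=\psi\cdot\phi\neq\phi$), and any continuous such $\psi$ has $\mathrm{supp}\,\psi$ containing the closure of a neighborhood of $\mathrm{supp}\,\phi$, which is strictly larger in general; so you only ever get $\pi_{L,H}(\mathrm{supp}\,f)\subseteq\mathrm{supp}\,\psi\supsetneq\mathrm{supp}\,\phi$. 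The phrases ``take the infimum over admissible $\psi$'' and ``a final trimming using $K$'' are not proof steps — no single $f$ is produced whose support maps exactly onto $\mathrm{supp}\,\phi$. The paper avoids this entirely by invoking Proposition 2.48 of Folland, which produces $\psi\in C_c(G)$ with $P_H\psi=\phi$ \emph{and} $\pi_H(\mathrm{supp}\,\psi)=\mathrm{supp}\,\phi$ already at the level of $G$; then $f:=P_L\psi$ satisfies $\mathrm{supp}\,f\subseteq\pi_L(\mathrm{supp}\,\psi)$, hence $\pi_{L,H}(\mathrm{supp}\,f)\subseteq\pi_H(\mathrm{supp}\,\psi)=\mathrm{supp}\,\phi$. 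If you want to stay on the homogeneous space, the missing idea is the \emph{division} trick behind Folland's proof: using Lemma~\ref{le3} take compact $K\subseteq G/L$ with $\pi_{L,H}(K)=\mathrm{supp}\,\phi$, choose $g\in C_c^{+}(G/L)$ with $g>0$ on $K$, so $R_{L,H}g>0$ on $\mathrm{supp}\,\phi$, and set $f:=g\cdot\bigl((\phi/R_{L,H}g)\circ\pi_{L,H}\bigr)$ (extended by $0$); then Proposition~\ref{pr32} gives $R_{L,H}f=\phi$ and $\mathrm{supp}\,f\subseteq\pi_{L,H}^{-1}(\mathrm{supp}\,\phi)$, which closes the gap. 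Multiplication by a cutoff alone cannot replace this renormalization.
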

\begin{proof}
Let $\phi \in C_c(G/H)$. By Proposition 2.48 in \cite{3}, there exists $\psi \in C_c(G) $ such that $ P_{H}(\psi)=\phi $ and $ supp \pi_{H} (\psi) = supp \phi $. Set $ f:=P_{L}(\psi) $. Since $ P_{H}=R_{L, H}\circ P_{L} $, $R_{L, H}(f)=R_{L, H}(P_{L}(\psi))=P_{H}(\psi)=\phi $.
Clearly if $ \phi \geq 0 $, choose  $  \psi \geq 0  $, So $ f\geq 0 $.
\end{proof}
The following theorems are a generalization of the quotient integral formula and illustrate how two Radon measures on homogenuous spaces are related. 

\begin{theorem}
\label{th5}
Suppose $G$ is a locally compact group, $H$ is a closed subgroup of $G$ and $L$ is a closed subgroup of $H$ such that $\Delta_H\big|_L=\Delta_L$. There exist $G$-invariant Radon measures $\mu$ and $\nu$ on $G/H$ and $G/L$, respectively, if and only if $\Delta_G\big|_H=\Delta_H$. In this case, these two measures are unique up to a constant factor, and if this factor is suitably chosen, we have 
\begin{align*}
\int_{G/L}f(xL)d\nu(xL)\
&=\int_{G/H}R_{L, H}f(xH)d\mu(xH)\\
&=\int_{G/H}\int_{H/L}f(xhL)d\eta(hL)d\mu(xH)\qquad(f\in C_c(G/L)).
\end{align*}
\end{theorem}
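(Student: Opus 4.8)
The plan is to split the statement into three parts: the existence criterion (the ``if and only if''), the uniqueness of $\mu$ and $\nu$ up to a scalar, and the integral identity.

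For the existence criterion I would invoke the characterization recalled in Section~2 (from Proposition~2.54 and Theorem~2.56 of \cite{3}): a left coset space $G/K$ carries a $G$-invariant Radon measure precisely when the constant function $1$ is a rho-function for $(G,K)$, i.e.\ when $\Delta_G\big|_K=\Delta_K$. Applied with $K=H$ this gives one implication at once: if such a $\mu$ exists then $\Delta_G\big|_H=\Delta_H$ (existence of $\nu$ is not even needed here). Conversely, assume $\Delta_G\big|_H=\Delta_H$; then $\mu$ exists on $G/H$, and restricting the identity $\Delta_G\big|_H=\Delta_H$ to the subgroup $L\subseteq H$ and combining with the standing hypothesis $\Delta_H\big|_L=\Delta_L$ yields $\Delta_G\big|_L=\Delta_H\big|_L=\Delta_L$, so $G/L$ also carries a $G$-invariant Radon measure $\nu$. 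The uniqueness of $\mu$ and $\nu$ up to a positive constant is the standard uniqueness statement for invariant measures on homogeneous spaces (the same cited results), so nothing new is needed there.

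For the integral identity I would fix Haar measures $dx$, $dh$, $d\ell$ on $G$, $H$, $L$ once and for all, and let $\mu$, $\nu$, $\eta$ be the associated measures coming from the quotient integral formula \eqref{eq22} with the trivial rho-functions for the pairs $(G,H)$, $(G,L)$, $(H,L)$; all three rho-functions are legitimate precisely because of the modular identities just established, and this fixes the normalization referred to in the statement. Given $f\in C_c(G/L)$, use surjectivity of $P_L:C_c(G)\to C_c(G/L)$ to pick $\psi\in C_c(G)$ with $P_L\psi=f$. Two applications of the quotient integral formula then give
\[
\int_{G/L}f(xL)\,d\nu(xL)=\int_{G/L}P_L\psi(xL)\,d\nu(xL)=\int_G\psi(x)\,dx ,
\]
and, using the relation $R_{L,H}\circ P_L=P_H$ already exploited in the proofs of Propositions \ref{pr32} and \ref{pr4},
\[
\int_{G/H}R_{L,H}f(xH)\,d\mu(xH)=\int_{G/H}P_H\psi(xH)\,d\mu(xH)=\int_G\psi(x)\,dx .
\]
Comparing the two displays yields the first equality, and the second equality in the theorem is just the definition \eqref{eq31} of $R_{L,H}$ written as an iterated integral.

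The one place that requires genuine care is the compatibility of normalizations: the argument above is valid only if the very same Haar measure $dx$ occurs in both quotient integral formulas and if $\eta$ is normalized relative to $dh$ and $d\ell$ so that $R_{L,H}\circ P_L=P_H$ really holds (this is where $\Delta_H\big|_L=\Delta_L$ re-enters, through the quotient integral formula for the pair $(H,L)$). Once the three Haar measures are fixed, $\mu$, $\nu$, $\eta$ are pinned down and these compatibilities are automatic, so the difficulty is bookkeeping rather than analysis. An essentially equivalent route that sidesteps the explicit choice of $\psi$ is to note that $R_{L,H}$ intertwines the left $G$-actions, so $f\mapsto\int_{G/H}R_{L,H}f\,d\mu$ is a $G$-invariant positive linear functional on $C_c(G/L)$ that is nonzero by Proposition \ref{pr4}; by uniqueness it must be a positive multiple of $\int_{G/L}(\cdot)\,d\nu$, and one absorbs that multiple into the normalization.
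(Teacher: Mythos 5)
Your proposal is correct and follows essentially the same route as the paper: derive $\Delta_G\big|_L=\Delta_L$ from the two modular hypotheses to get existence of $\mu$ and $\nu$, lift $f$ to $\psi\in C_c(G)$ with $P_L\psi=f$, and chain the three quotient integral formulas for $(G,L)$, $(G,H)$ and $(H,L)$ (the last being exactly the identity $R_{L,H}\circ P_L=P_H$ that the paper verifies inline). Your explicit attention to the compatible normalization of the three measures is a point the paper passes over more quickly, but the argument is the same.
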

\begin{proof}
Since $\Delta_G\big|_H=\Delta_H$ and $\Delta_H\big|_L=\Delta_L$, it follows that $\Delta_G\big|_L=\Delta_L$. Then by \eqref{eq22} there exists such measures. If $f\in C_c(G/L)$ there exists $\phi\in C_c(G)$ such that $P_{L}\phi=f$ and we have 
\begin{align}
&\int_G\phi(x)dx=\int_{G/H}\int_H\phi(xh)dhd\mu(xH)\label{eq32}\\
&\int_G\phi(x)dx=\int_{G/L}\int_L\phi(xl)dld\nu(xL)\label{eq33}\\
&\int_H\psi(h)dh=\int_{H/L}\int_L\psi(hl)dld\eta(hL),\quad(\psi\in C_c(H)).\label{eq34}
\end{align}
From \eqref{eq33},\eqref{eq32} and \eqref{eq34} we have 
\begin{align*}
\int_{G/L}f(xL)d\nu(xL)\
&=\int_{G/L}\int_L\phi(xl)dld\nu(xL)\\
&=\int_{G/H}\int_H\phi(xh)dhd\mu(xH).
\end{align*}
For a given $ x\in G $, define $ \psi: H\rightarrow C$ by $ \psi(h):=\varphi(xh) $. Trivially, $ \psi\in C_{c}(H) $ and

\begin{align*}
\int_{G/H}\int_H\phi(xh)dhd\mu(xH)\
&=\int_{G/H}\int_H\psi(h)dhd\mu(xH)\\
&=\int_{G/H}\int_{H/L}\int_{L}\psi(hl)dld\eta(hL)d\mu(xH)\\
&=\int_{G/H}\int_{H/L}\int_{L}\varphi(xhl)dld\eta(hL)d\mu(xH)\\
&=\int_{G/H}\int_{H/L}P_{L}\varphi(xhL)d\eta(hL)d\mu(xH)\\
&=\int_{G/H}\int_{H/L}f(xhL)d\eta(hL)d\mu(xH)\\
&=\int_{G/H}R_{L, H}f(xH)d\mu(xH).
\end{align*}
The converse is immediate by the quotient integral formula (Theorem 2.56 in \cite{3}). 
\end{proof}
Some manifolds that we met in a differential geometry class are homogeneous space: spheres, tori, Grassmannians, flag manifolds, Stiefel manifolds, etc are of this type. In the following example to elucidate the usefulness of Theorem \ref{th5}, we provide a Radon measure for a manifold by knowing it for another homogeneous space. 
\begin{example}
Recall that $G:=SL (2, \mathbf{R})$ acts transitively on the upper half plane $ \mathbf{H}^{+}:=\lbrace z\in \mathbf{C} \mid Im(z)>0 \rbrace$ via:
\begin{center}
\[\left(  {\begin{array}{cc}
   \alpha & \beta  \\
   \gamma & \eta  \\
    \end{array} } \right) z:=\frac{\alpha z+\beta}{\gamma z+\eta }.\]
\end{center}

One readily verifies that $H:=stab_{G}(i)=SO_{2}(\mathbf{R}) $; also, it can be checked that $ H $ acts transitively on the space of all lines $ \mathbf{P}^{1}(\mathbf{R})=\lbrace V\leq \mathbf{R}^{2} \mid dimV=1\rbrace $. Here, $L:=stab_{H}(<e_{1}>)=\lbrace \pm I\rbrace, $ so the maps
\begin{align*}
{\begin{array}{cc}
   G/H\rightarrow \mathbf{H}^{+}\qquad& \mathbf{H}^{+}\rightarrow G/H\qquad \qquad \\
    gH\mapsto gi,\qquad &   \qquad x+iy\mapsto \left({\begin{array}{cc}
   \sqrt{y} & x\sqrt{y}^{-1}  \\
   0 & \sqrt{y}^{-1}  \\
    \end{array} } \right),  \\
    \end{array} }
\end{align*}
and
\begin{align*}
{\begin{array}{cc}
   H/L\rightarrow \mathbf{P}^{1}(\mathbf{R})
\qquad& \mathbf{P}^{1}(\mathbf{R})\rightarrow H/L\qquad\qquad\qquad\qquad\qquad\qquad\qquad \\
   hL\mapsto h<e_{1}>,\qquad &   <\left( \cos \theta ,\sin\theta \right) >\mapsto \left({\begin{array}{cc}
   \cos \theta & -\sin \theta  \\
   \sin \theta & \cos \theta  \\
    \end{array} } \right) or \left({\begin{array}{cc}
   -\cos \theta & \sin \theta  \\
   \sin \theta & \cos \theta  \\
    \end{array} } \right).  \\
    \end{array} }\\
\end{align*}
are homeomorphisms. Since $G$, $H$ and $ L $ are unimodular, we conclude the existence of the Haar measures $ \mu $ and $ \nu $ on $G/H$  and $ H/L $, respectively. Therefore the linear map

\begin{align*}
C_{c}(G/L)\rightarrow \mathbf{C}, \qquad f\mapsto \int_{G/H}\int_{H/L}f(ghL)d\nu(hL)d\mu(gH)\qquad(f\in C_c(G/L)), 
\end{align*}
is a Radon measure on $ G/L $. We consider the homeomorphisms $ H/L\cong \mathbf{P}^{1}(\mathbf{R})  $ and $ G/H\cong \mathbf{H}^{+} $, then the functional
\begin{align*}
f\mapsto \frac{1}{2\pi}\int_{-\infty}^{+\infty}\int_{0}^{+\infty}\int_{0}^{2\pi}f\left( \left({\begin{array}{cc}
   \sqrt{y} & x\sqrt{y}^{-1}  \\
   0 & \sqrt{y}^{-1}  \\
    \end{array} } \right)\left( {\begin{array}{cc}
   \tan {\theta} \\
   0 \\
    \end{array} } \right)\right) d\theta \frac{dydx}{y^{2}}\qquad(f\in C_c(G/L)), 
\end{align*}
is a Radon measure with total mass $ \pi $ on $ \mathbf{P}^{1}(\mathbf{R}) $.

\end{example}

At this point, we suppose that $G/H$ does not possess a $G-$invariant Radon measure.
\begin{theorem}\label{th6}
Suppose $G$ is a locally compact group, $H$ is a closed subgroup of $G$ and $L$ is a closed subgroup of $H$. Let $\rho_{G, H}$ be an arbitrary rho$ - $function for the pair $(G, H)$ and $\Delta_G\big|_L=\Delta_H\big|_L=\Delta_L$ and also assume that $\nu$ and $\eta$ are $G$-invariant and $H$-invariant Radon measures on $G/L$ and $H/L$, respectively. There is a regular Borel measure $\mu$ on $G/H$ such that\\
\begin{align*}
\int_{G/H}R_{L, H}f(xH)d\mu(xH)\
&=\int_{G/L}\rho_{G, H}(xL)f(xL)d\nu(xL)\qquad(f\in C_c(G/L)). 
\end{align*}
\end{theorem}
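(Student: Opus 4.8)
The plan is to take $\mu$ to be exactly the measure attached by the quotient integral formula \eqref{eq22} to the pair $(G,H)$ and the given rho$-$function $\rho_{G,H}$, and then to verify the asserted identity by lifting $f\in C_c(G/L)$ to a compactly supported function on $G$ and collapsing the resulting nested integrals by means of the quotient integral formulas for $(H,L)$, $(G,H)$ and $(G,L)$ in turn.

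Before the main computation I would isolate two facts in which the modular hypotheses are used. First, from $\Delta_G\big|_L=\Delta_H\big|_L=\Delta_L$ and the defining relation $\rho_{G,H}(xh)=\Delta_H(h)\Delta_G(h)^{-1}\rho_{G,H}(x)$ one gets $\rho_{G,H}(xl)=\rho_{G,H}(x)$ for all $l\in L$, so $\rho_{G,H}$ is constant on $L$-cosets and defines a continuous strictly positive function on $G/L$, which is what makes the right-hand side meaningful (and consistent with the notation $\rho_{G,H}(xL)$). Second, since $\Delta_G\big|_L=\Delta_L$, the constant function $1$ is a rho$-$function for $(G,L)$, so (after fixing Haar measures and normalizing $\nu,\eta$ compatibly, as the excerpt does implicitly) $\nu$ obeys $\int_G g(x)\,dx=\int_{G/L}\int_L g(xl)\,dl\,d\nu(xL)$ for $g\in C_c(G)$, and likewise $\eta$ obeys $\int_H\phi(h)\,dh=\int_{H/L}\int_L\phi(hl)\,dl\,d\eta(hL)$ for $\phi\in C_c(H)$; the latter is precisely the identity underlying $R_{L,H}\circ P_L=P_H$, which is used repeatedly above.

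For the computation, given $f\in C_c(G/L)$ choose $\psi\in C_c(G)$ with $P_L\psi=f$. Then $R_{L,H}f=R_{L,H}(P_L\psi)=P_H\psi$, so by \eqref{eq22} for $(G,H)$ and $\rho_{G,H}$,
\[
\int_{G/H}R_{L,H}f(xH)\,d\mu(xH)=\int_{G/H}\int_H\psi(xh)\,dh\,d\mu(xH)=\int_G\psi(x)\rho_{G,H}(x)\,dx.
\]
Now apply the $(G,L)$ quotient integral formula to $g:=\psi\,\rho_{G,H}\in C_c(G)$; since $\rho_{G,H}(xl)=\rho_{G,H}(x)$ is independent of $l\in L$ it leaves the inner integral, yielding
\[
\int_G\psi(x)\rho_{G,H}(x)\,dx=\int_{G/L}\rho_{G,H}(xL)\Big(\int_L\psi(xl)\,dl\Big)d\nu(xL)=\int_{G/L}\rho_{G,H}(xL)\,f(xL)\,d\nu(xL),
\]
which is the desired formula; in passing this also shows the right-hand side is independent of the chosen lift $\psi$.

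The construction is short once $\mu$ is guessed correctly, so I expect no serious obstacle: the only delicate points are keeping the normalizations of the Haar measures and of $\nu,\eta$ mutually consistent and correctly exploiting the right-$L$-invariance of $\rho_{G,H}$ (equivalently, the combined hypothesis $\Delta_G\big|_L=\Delta_H\big|_L=\Delta_L$). A more abstract alternative would be to define $\Lambda(\phi):=\int_{G/L}\rho_{G,H}(xL)f(xL)\,d\nu(xL)$ for $\phi=R_{L,H}f$, use Proposition \ref{pr4} to see every $\phi\in C_c(G/H)$ and every element of $C^{+}_c(G/H)$ arises this way, verify well-definedness (i.e.\ $R_{L,H}f=0\Rightarrow\int\rho_{G,H}f\,d\nu=0$) and positivity, and then invoke the Riesz representation theorem; but the direct route above sidesteps that well-definedness check.
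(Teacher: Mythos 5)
Your proposal is correct and follows essentially the same route as the paper: take $\mu$ to be the regular Borel measure associated to $\rho_{G,H}$ by the quotient integral formula, lift $f$ to $\psi\in C_c(G)$ with $P_L\psi=f$, use $R_{L,H}\circ P_L=P_H$ together with the quotient integral formulas for $(G,H)$, $(H,L)$ and $(G,L)$, and exploit the right-$L$-invariance $\rho_{G,H}(xl)=\rho_{G,H}(x)$. If anything, you make explicit a point the paper leaves tacit, namely that the hypothesis $\Delta_G\big|_L=\Delta_H\big|_L=\Delta_L$ is exactly what forces $\rho_{G,H}$ to be constant on $L$-cosets so that $\rho_{G,H}(xL)$ is well defined.
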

\begin{proof}
By Proposition $1.15$ of \cite{6}, there exists a regular Borel measure $\mu$ on $G/H$ such that\\
\begin{align}
\int_{G/H}\int_{H}\phi(xh)dhd\mu(xH)=\int_{G}\phi(x)\rho_{G, H}(x)dx\qquad(\phi\in C_c(G)),\label{eq35}
\end{align}\\
and also by the quotient integral formula we have
\begin{align}
&\int_{G/L}\int_{L}\phi(xl)dld\nu(xL)=\int_{G}\phi(x)dx\qquad(\phi\in C_c(G)),\label{eq36}\\
&\int_{H/L}\int_{L}\phi(hl)dld\eta(hL)=\int_{H}\phi(h)dh\qquad(\phi\in C_c(H)).\label{eq37}
\end{align}\\

Let $f\in C_c(G/L)$. By Proposition $ 1.9 $ of \cite{6} there exists $\phi\in C_c(G)$ such that $ P_{L}\phi=f $,
where $P_{L}:C_c(G)\to C_c(G/L)$ is the following surjective bounded operator 
\begin{align}
P_{L}\phi(xL)=\int_{L}\phi(xl)dl\qquad(\phi\in C_c(G)).\label{eq38}
\end{align}
By using \eqref{eq31} and \eqref{eq38} we have 
\begin{align*}
\int_{G/H}R_{L, H}f(xH)d\mu(xH)\
&=\int_{G/H}\int_{H/L}f(xhL)d \eta(hL)d\mu(xH)\\
&=\int_{G/H}\int_{H/L}P_{L} \phi(xhL)d\eta(hL)d\mu(xH)\\
&=\int_{G/H}\int_{H/L}\int_{L}\phi(xhl)dld\eta(hL)d\mu(xH).
\end{align*}

Now, from \eqref{eq37}, \eqref{eq35} and \eqref{eq36} we have

\begin{align*}
\int_{G/H}\int_{H/L}\int_{L}\phi(xhl)dld\eta(hL)d\mu(xH)\
&=\int_{G/H}\int_{H}\phi(xh) dhd\mu(xH)\\
&=\int_{G}\phi(x)\rho_{G, H}(x)dx\\
&=\int_{G/L}\int_{L}\phi(xl)\cdot\rho_{G, H}(xl)dld\nu(xL).\\
&=\int_{G/L}\rho_{G, H}(x)\int_{L}\phi(xl)dld\nu(xL).\\
&=\int_{G/L}\rho_{G, H}(xL)\int_{L}\phi(xl)dld\nu(xL).\\
&=\int_{G/L}\rho_{G, H}(xL)f(xL)d\nu(xL).
\end{align*}

\end{proof}
The following corollay easily follows from the Theorem \ref{th6}.
\begin{corollary}\label{n3}
If $ f\in C_{c}(G/L) $ and $ R_{L, H}f=0 $ then $\int_{G/L}\rho_{G, H}(xL)f(xL)d\nu(xL)=0$.

\end{corollary}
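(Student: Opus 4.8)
The plan is to read the identity in Theorem~\ref{th6} with the particular $f$ at hand and observe that the left-hand side collapses. Concretely, suppose $f\in C_c(G/L)$ satisfies $R_{L,H}f=0$. All standing hypotheses of Theorem~\ref{th6} are already in force (we are given the rho-function $\rho_{G,H}$ for $(G,H)$, the equalities $\Delta_G\big|_L=\Delta_H\big|_L=\Delta_L$, and the invariant measures $\nu$ on $G/L$ and $\eta$ on $H/L$), so that theorem furnishes a regular Borel measure $\mu$ on $G/H$ with
\[
\int_{G/H}R_{L, H}f(xH)\,d\mu(xH)=\int_{G/L}\rho_{G, H}(xL)f(xL)\,d\nu(xL).
\]
Since $R_{L,H}f=0$, the integrand on the left vanishes identically, hence the left-hand integral is $0$; therefore the right-hand integral is $0$, which is exactly the assertion of the corollary.

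The only point that merits a remark is that the right-hand integrand is meaningful, i.e.\ that $x\mapsto\rho_{G,H}(xL)$ is a genuine function on $G/L$. This is where $\Delta_G\big|_L=\Delta_H\big|_L=\Delta_L$ is used: for $l\in L\subseteq H$ the defining relation of a rho-function gives $\rho_{G,H}(xl)=\Delta_H(l)\Delta_G(l)^{-1}\rho_{G,H}(x)=\Delta_L(l)\Delta_L(l)^{-1}\rho_{G,H}(x)=\rho_{G,H}(x)$, so $\rho_{G,H}$ is constant on $L$-cosets and descends to $G/L$. This is already implicit in the statement of Theorem~\ref{th6}, so nothing new is needed here.

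There is no real obstacle: the corollary is a direct specialization of Theorem~\ref{th6}, obtained by a one-line substitution. If one preferred a self-contained argument avoiding the citation of Theorem~\ref{th6}, one could instead choose $\phi\in C_c(G)$ with $P_L\phi=f$ (Proposition~$1.9$ of \cite{6}) and rerun the same chain of quotient integral formulas \eqref{eq35}--\eqref{eq37} appearing in its proof, using $R_{L,H}f=0$ to annihilate the first term; but invoking the theorem directly is the cleaner route.
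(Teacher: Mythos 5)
Your proof is correct and is exactly the argument the paper intends: the corollary is stated as an immediate consequence of Theorem~\ref{th6}, and you obtain it by substituting $R_{L,H}f=0$ into that theorem's identity so that the left-hand side vanishes. Your additional remark on why $\rho_{G,H}$ descends to a function on $G/L$ is a sensible clarification but introduces nothing beyond the paper's route.
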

\begin{example}
Suppose $G$ is a locally compact group, $H$ is a normal closed subgroup of $G$ and $L$ is a closed subgroup of $H$. Let $\rho_{G, H}$ be an arbitrary rho$ - $function for the pair $(G, H)$ and also let $\nu$ and $\eta$ be $G$-invariant and $H$-invariant Radon measures on $G/L$ and $H/L$, respectively. There is a strongly quasi-invariant measure $\mu$ on $G/H$ such that\\
\begin{align*}
\int_{G/H}R_{L, H}f(xH)d\mu(xH)\
&=\int_{G/L}\rho_{G, H}(xL)f(xL)d\nu(xL)\qquad(f\in C_c(G/L)). 
\end{align*}

\end{example}

\end{document}